\newcommand{\upperRomannumeral}[1]{\uppercase\expandafter{\romannumeral#1}}
\renewenvironment{abstract}[1]
  {\bigskip\selectlanguage{#1}%
   \begin{center}\bfseries\abstractname\end{center}}
  {\par\bigskip}
\theoremstyle{plain}
\newtheorem{theorem}{Theorem}[section]
\newtheorem{lemma}[theorem]{Lemma}
\newtheorem{corollary}[theorem]{Corollary}
\theoremstyle{definition}
\newtheorem{conjecture}[theorem]{Conjecture}
\newtheorem{remark}[theorem]{Remark}
\begin{document}

\title[Distribution of factorials modulo $p$]
{\bf Distribution of factorials modulo $p$}
\author{Oleksiy Klurman} 
\address{D\'epartment de Math\'ematiques et de Statistique,
Universit\'e de Montr\'eal, CP 6128 succ. Centre-Ville, Montr\'eal QC H3C 3J7, Canada
Canada} \email{\texttt{lklurman@gmail.com}}
\author{Marc Munsch}
\address{CRM, Universit\'e de Montr\'eal, 5357 Montr\'eal, Qu\'ebec }
\email{munsch@dms.umontreal.ca}
\date{\today}

\subjclass{11B50, 11B83, 11R09, 11R45}
\keywords{Distribution of sequences mod $p$, polynomials, density results.}

\maketitle

\begin{abstract}{french}
 \small{On démontre que la suite $n!\,(\bmod\,p)$ prend au moins $\sqrt{\frac{3}{2}N}$ valeurs distinctes lorsque $n$ parcourt l'intervalle court $H\le n \le H+N$ où $N\gg p^{\frac{1}{4}}$, ceci constituant une amélioration de la borne triviale précédemment connue $\sqrt{N}.$ On explore le problème complémentaire des valeurs non atteintes par cette suite. Dans ce sens, on obtient, en moyenne sur les nombres premiers $p\le x$, une minoration du nombre de classes modulo $p$ évitées par la suite $n!\,(\bmod\,p)$.}
\end{abstract}

\begin{abstract}{english}
\small{We prove that the sequence $n!\,(\bmod\,p)$ occupies at least $\sqrt{\frac{3}{2}N}$ residue classes in the short interval $H\le n \le H+N$ and $N\gg p^{\frac{1}{4}}$ improving previously known trivial bound $\sqrt{N}.$ In the other direction, we estimate the average number of residue classes missed by the sequence $n!\,(\bmod\,p)$ for $p\le x.$}
\end{abstract}

\section{Introduction}

Following~\cite{TrAMS}, for each odd prime $p$ let $V(H,N)$ denote the number of distinct residue classes modulo $p$ that are taken by the sequence $\{n!, n=2,3\dots, p-1\},$ $H\le n\le H+N.$ Very little seems to be known about the behaviour of $V(H,N).$ P. Erd\"os conjectured that $2!,3!\dots, (p-1)!$ cannot be all distinct modulo $p$, in other words $V(0,p-1)\neq p-2$. Although the conjecture is widely open, B. Rokowska and A. Schinzel~\cite{RokoSchinzel}  proved that this strong condition implies some restrictions on the values of $p.$ This allows to verify that the conjecture holds true for $p<10^9$ (see \cite{Trudgian}). 
More generally, the following asymptotic is conjectured in \cite{Guy}: $$V(0,p-1)\sim \left(1-\frac{1}{e}\right)p.$$ 

In \cite{Zaha}, C. Cobeli, M. V\^aj\^aitu and A. Zaharescu provide a  strong support towards this conjecture  (see also \cite{Broughan}). They proved that for a random permutation $\sigma$ of the set $\{1,\cdots,p-1\}$, the products $$\left\{\displaystyle{\prod_{i=1}^{n}\sigma(i), n=1,\cdots,p-1}\right\}$$ cover the expected number of residue classes. This implies in particular that in case the sequence $\{n!, n=2,3\dots, p-1\}$ did not satisfy Guy's observation then it would not, in some sense, be a "standard" sequence amongst the set of all sequences of length $p.$

In a series of papers, \cite{TrAMS}, \cite{Garaev-Luca} and \cite{Crelle}, M. Garaev, F. Luca and I.  Shparlinski initiated extensive study of distribution properties of $n!(\bmod\ p).$
In particular, in \cite{TrAMS} the authors remark that the only known lower bound for 
$V(H,N)$ is a trivial one, namely $$V(H,N)\ge \sqrt{N-1}.$$
Indeed, this immediately follows from the fact that the remainders $\frac{n!}{(n-1)!}=n$ are all distinct for $1\le n\le p-1.$   

Motivated by this question, V. Lev considered a similar problem in every finite abelian group $G$. He showed \cite[Theorem 2]{Lev} that there is a permutation $(g_1,\cdots,g_{|G|})$ of the elements of $G$ such that the number of distinct sums of the form $$g_1+\cdots+g_j \,(1\leq j\leq |G|)$$ 
is $O(\sqrt{|G|})$ and noticed that this is the smallest size possible. By fixing a primitive root $g$ modulo $p$ and passing to indices with respect to $g,$ the question about the distribution of factorials reduces to considering the cyclic group of order $p-1$ and the permutation given by the indices. V. Lev observes that the conclusion one can draw concerning the sequence $n!$ is of a negative sort: in order to improve the lower bound on $V(H,N),$ combinatorics is not sufficient and one has to exploit the special properties of this particular sequence.

In this note, among other things, we present an elementary way to obtain nontrivial lower bound on $V(H,N)$ for all $N\gg p^{\frac{1}{4}+\varepsilon}.$

It is worth mentioning that nontrivial lower bounds for $V(0,p-1)$ were previously known. For instance, Theorem 13 from~\cite{TrAMS} immediately implies that 
\[ V(0,p-1)\ge \sqrt{\frac{5}{4}p}.\]
The latter was subsequently improved by Chen and Dai (see~\cite{ChenDai}) to exactly the same constant $\sqrt{\frac{3}{2}}.$ Despite being extremely short, the proof in~\cite{ChenDai} uses a deep theorem of Zhang on his solution of the Lehmer problem and does not generalize to the short intervals $H\le n\le N+H.$  

In the other direction, it was proved in \cite{Turkish} that there exists infinitely many primes $p$ such that  $n! (\text{mod}\  p)$ omits at least
\begin{equation}\label{turk1}
 p-V(0,p-1)\gg\frac{\log\log p}{\log\log\log p}
 \end{equation}
 residue classes. 
 Applying the method of \cite{Turkish} and replacing the unconditional error term in Chebotarev's theorem by the GRH error term, yields infinitely many primes such that
 \begin{equation}\label{turk2}
p-V(0,p-1)\gg \frac{\log p}{\log \log p}\cdot 
\end{equation} 
 We remark that in~\cite{Turkish}, due to the use of the bound on the least prime ideal from \cite{least}, one gets an extremely sparse set of primes satisfying~\eqref{turk1} and~\eqref{turk2}. We are going to prove an average analog of this result. 
 In fact, under the assumption of the Generalized Riemann Hypothesis (GRH), our result immediately implies existence of infinitely many primes with  
\[ p-V(0,p-1)\gg \frac{p^{\frac{1}{4}}}{\log p}\cdot\]

\vspace{3mm}

\section{Estimate for $V(H,N)$}

We are going to prove the following theorem:
\begin{theorem}\label{main}
The set of $n!\,(\bmod\,p),$ $H\le n\le H+N$ contains at least $\sqrt{\frac{3}{2}N}$ values for all $N\gg p^{\frac{1}{4}+\varepsilon}.$
\end{theorem}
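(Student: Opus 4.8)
The plan is to turn the statement into a counting problem about coincidences $n!\equiv m!\pmod p$ inside the window, and then to control that count by an incomplete character sum estimate of Burgess type; the hypothesis $N\gg p^{1/4+\varepsilon}$ is precisely the range in which that estimate is available.

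Write $a_n\equiv n!\pmod p$, let $B=\{a_n\colon H\le n\le H+N\}$, so $V:=V(H,N)=|B|$, and for each residue $b$ put $k_b=\#\{n\in[H,H+N]\colon a_n\equiv b\}$. Then $\sum_b k_b=N+1$, the sum having exactly $V$ nonzero terms, so Cauchy--Schwarz gives $(N+1)^2\le V\sum_b k_b^2$. Thus it suffices to bound $\sum_b k_b^2$, and since $n!\equiv m!$ for $n>m$ is the same as $(m+1)(m+2)\cdots n\equiv 1\pmod p$,
\begin{equation*}
\sum_b k_b^2=(N+1)+2\sum_{d\ge 1}\#\bigl\{a\in[H,H+N-d]\colon (a+1)(a+2)\cdots(a+d)\equiv 1\pmod p\bigr\}.
\end{equation*}
(Equivalently, one may argue combinatorially: the injection $n\mapsto(a_n,a_{n+1})$ sends $[H,H+N-1]$ into $B\times B$ off the diagonal, and, on symmetrising, the only obstruction to $2N\le V^2-V$ is the set of ``doubled'' pairs $(x,y)$ with $(x,y)$ and $(y,x)$ both attained; a doubled pair forces indices with $(n+1)(m+1)\equiv 1\pmod p$, so doubled pairs are governed by the same kind of count as above.)

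The key step is to estimate, for each $d$, the number $N_d$ of $a$ in the interval with $(a+1)\cdots(a+d)\equiv 1$. Detecting the condition by multiplicative characters, $N_d=\frac1{p-1}\sum_\chi\sum_a\chi(a+1)\cdots\chi(a+d)$; the principal character contributes $\asymp N/p$, and for $\chi\ne\chi_0$ the inner sum is $\sum_{a\in I}\chi\bigl(g_d(a)\bigr)$ with $g_d(a)=(a+1)\cdots(a+d)$ a squarefree polynomial, which by the Burgess bound for polynomial arguments is $o(N)$ as soon as $N\gg p^{1/4+\varepsilon}$ — this is the source of the hypothesis. Combining this with the trivial bounds $N_d\le d$ and $N_d\le N-d+1$, using the character estimate for the small-gap range and, for large $d$, the fact that a single residue cannot be hit many times by factorials spaced far apart (a run-length argument, or the reflection $n!\,(p-1-n)!\equiv(-1)^n$ to move the window), one should reach $\sum_b k_b^2\le\bigl(\tfrac23+o(1)\bigr)(N+1)^{3/2}$; feeding this back into $(N+1)^2\le V\sum_b k_b^2$ yields $V\ge(1-o(1))\sqrt{\tfrac32 N}$, and a careful bookkeeping of lower-order terms (rather than $o(1)$ losses) gives the clean constant $\sqrt{3/2}$, the same as in Chen--Dai.

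The main obstacle is the \emph{uniformity} of the character-sum input. The Burgess estimate for $\sum_{a\in I}\chi(g_d(a))$ has an implied constant and a power saving that both degrade with the degree $d$, so one cannot simply sum it over all $d\le N$; the small-gap contribution (handled analytically) must be balanced against the large-gap contribution (handled by the gap/run-length constraint), and the whole estimate must moreover be uniform in the location $H$ of the window — for $H$ close to $p$ the naive hyperbola/divisor argument for the large-gap piece breaks down. Doing all of this sharply enough to land on the constant $\tfrac32$, rather than merely $1+\varepsilon_0$ for some unspecified $\varepsilon_0>0$, is the delicate point of the proof.
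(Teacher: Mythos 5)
Your reduction via Cauchy--Schwarz puts the entire weight of the theorem on the estimate $\sum_b k_b^2\le\bigl(\tfrac23+o(1)\bigr)(N+1)^{3/2}$, i.e.\ on showing that the number of coincidences $n!\equiv m!\pmod p$ with both indices in the window is $O(N^{3/2})$, and this is where the argument has a genuine gap: none of the tools you invoke can deliver it. For fixed $d$, the condition $(a+1)\cdots(a+d)\equiv 1\pmod p$ is a polynomial equation of degree $d$, so $N_d\le d$ unconditionally; character detection cannot improve on this in the relevant range, because the expected main term is $N/(p-1)<1$, while the error term --- whether you complete the sum and use Weil (error of size $d\sqrt p$, which exceeds $N$ for every $d\ge 1$ when $N\asymp p^{1/4+\varepsilon}$) or appeal to a Burgess-type bound for $\chi(g_d(a))$ (available only for special characters/polynomials, with constants and savings that collapse as $d$ grows) --- never beats the trivial bound $\min(d,N-d+1)$. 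Summing these trivial bounds over $d\le N$ gives $\sum_d N_d\asymp N^2$, hence $\sum_b k_b^2\asymp N^2$, which fed back into $(N+1)^2\le V\sum_b k_b^2$ returns only $V\gg 1$. Even an (unavailable) uniform saving $N_d\ll N^{1-\delta}$ of Burgess quality would give $\sum_b k_b^2\ll N^{2-\delta}$, still far above $N^{3/2}$; and the ``run-length/reflection'' remark for large $d$ is not an argument --- no nontrivial pointwise bound on $k_b$ is known. (There is also a bookkeeping slip: with $\sum_b k_b^2\le\tfrac23 N^{3/2}$ you would conclude $V\ge\tfrac32\sqrt N$, not $\sqrt{3N/2}$, so even the target constant is not calibrated correctly.)

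The paper proves the theorem by a route that avoids any second moment of multiplicities: it counts distinct ordered pairs of values produced by the pairs $(n,n+1)$ and $(n,n+2)$. Coincidences within the family $(n,n+2)$ force, via Wilson's theorem, an equation $((n+2)!)^2\equiv\pm(n+1)(n+2)\pmod p$, which has $O(N^{3/4})$ solutions by a purely combinatorial popular-difference argument (Lemmas \ref{sieve} and \ref{factopoly}); coincidences between the two families force $n\equiv m^2+3m+1\pmod p$, and the number of such $n$ in the window is $N/2+O(N^{1-\delta})$ by the Burgess bound for quadratic residues in intervals (Lemma \ref{Burgess}) --- the only place where $N\gg p^{1/4+\varepsilon}$ enters. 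This yields at least $\tfrac32 N+O(N^{1-\delta})$ distinct pairs of values, hence $V^2\ge\tfrac32 N+O(N^{1-\delta})$. To salvage your approach you would need a genuinely new bound on the number of solutions of $n!\equiv m!\pmod p$ in a box, which is an open problem rather than a delicate uniformity issue.
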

To prove this theorem we begin with the following three lemmas.

\begin{lemma}\label{Burgess}
Let  $\epsilon>0$ be fixed and $P(x)=x^2+bx+c \in \mathbb{Z}[x].$  Then for all $H>0$ and sufficiently large prime $p$ and $N$ such that $N\gg p^{\frac{1}{4}+\epsilon}$, there exists $\delta>0$ such that

$$\#\left\{ y=P(x)\, (\bmod \,p),\,\, H\leq y \leq H+N\right\}=\frac{N}{2} + O(N^{1-\delta}). $$

\end{lemma}

\begin{proof} Multiplying both sides of $y=P(x)$ by $4$ and applying linear change of variables $x\rightarrow 2x+\alpha$, the claim boils down to counting quadratic residues in an interval of length $N.$ The result follows from Burgess bound, see \cite{Burgesssaving}. \end{proof}

\begin{lemma}\label{sieve}
Suppose $S\subset [H,H+N]$ and $|S|=\alpha N.$ Then there exists $d\le \frac{1}{\alpha}$ such that there are at least $\ge \frac{\alpha^3}{2}N$ solutions of the equation
\[a-b=d,\]
where $a,b\in S.$
\end{lemma}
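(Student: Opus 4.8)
The strategy is an averaging argument followed by a pigeonhole step. For a positive integer $d$ write $r(d)=\#\{(a,b)\in S\times S:a-b=d\}$ for the number of solutions of $a-b=d$ with $a,b\in S$, and set
\[
T=\#\bigl\{(a,b)\in S\times S:\ 0<a-b\le 1/\alpha\bigr\}=\sum_{1\le d\le 1/\alpha}r(d).
\]
Since $T$ is a sum of at most $1/\alpha$ terms $r(d)$, there is some $d\le 1/\alpha$ with $r(d)\ge\alpha T$, and so it suffices to prove the lower bound $T\ge\tfrac12\,\alpha^2N$, which would give $r(d)\ge\tfrac12\,\alpha^3N$.

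To bound $T$ from below I would list $S=\{a_1<a_2<\dots<a_k\}$ with $k=|S|=\alpha N$ and consider the $k-1$ consecutive gaps $g_i=a_{i+1}-a_i\ge 1$. Since $S\subseteq[H,H+N]$ we have $\sum_{i<k}g_i=a_k-a_1\le N$, so the average gap has size at most $N/(k-1)$, which is essentially $1/\alpha$; a Markov-type inequality then shows that a positive proportion of the gaps --- thus $\gg\alpha N$ of them --- are of size at most $C/\alpha$ for an absolute constant $C$. Each such index $i$ gives the pair $(a_{i+1},a_i)$, distinct indices give distinct pairs, and hence $\sum_{1\le d\le C/\alpha}r(d)\gg\alpha N$, so in particular $T\gg\alpha^2N$. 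The same count can also be packaged by splitting $[H,H+N]$ into $\approx\alpha N$ consecutive blocks of length $\approx 1/\alpha$: two elements of $S$ lying in a common block form a pair with difference $O(1/\alpha)$, and Cauchy--Schwarz applied to the block occupancies $n_j=|S\cap B_j|$ (with $\sum_j n_j=\alpha N$) yields $\sum_j\binom{n_j}{2}\gg\alpha^2N$.

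Combining the two steps produces $d=O(1/\alpha)$ with $r(d)\gg\alpha^3N$. I expect the only real difficulty to be quantitative rather than conceptual: to obtain the stated constants --- namely $d\le 1/\alpha$ on the nose and $r(d)\ge\tfrac12\,\alpha^3N$, rather than $d=O(1/\alpha)$ and $r(d)\gg\alpha^3N$ --- one has to choose the Markov threshold (or block length) as sharply as possible and track the rounding in $\lfloor1/\alpha\rfloor$ together with the $O(1)$ losses in the averaging; the method itself uses nothing beyond elementary convexity and the pigeonhole principle.
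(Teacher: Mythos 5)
Your skeleton (first show there are many pairs of elements of $S$ at distance at most about $1/\alpha$, then pigeonhole over the possible values of $d$) is a reasonable elementary route, and it is close in spirit to the paper's double counting; but the pivotal intermediate claim, $T=\sum_{1\le d\le 1/\alpha}r(d)\ge\tfrac12\alpha^2N$, is neither proved by your two counts nor actually true in general, so this is not merely a matter of sharpening thresholds and tracking roundings. With blocks of length $\approx 1/\alpha$ you have $\approx\alpha N$ blocks, so Cauchy--Schwarz only gives $\sum_j n_j^2\ge\alpha N$, hence $\sum_j\binom{n_j}{2}\ge 0$, and indeed an arithmetic progression of common difference about $1/\alpha$ meets each block in at most one point and produces no within-block pairs at all. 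The Markov-on-gaps variant needs a threshold $C/\alpha$ with $C>1$ bounded away from $1$ to capture a positive proportion of the gaps, and the pairs it produces may have differences lying in $(1/\alpha,C/\alpha]$, which contribute nothing to $T$; the step ``so in particular $T\gg\alpha^2N$'' is a non sequitur. Worse, $T\ge\tfrac12\alpha^2N$ can fail: take $1/\alpha$ just below an integer $M$, let $S$ be the progression of difference $M$ padded with about $\delta\alpha^2N$ well-separated extra points to reach density exactly $\alpha$; then $T\approx 2\delta\alpha^2N$, which is $<\tfrac12\alpha^2N$ for small $\delta$ (and for $\delta\lesssim 1/(\alpha^3N)$ one even gets $\max_{d\le 1/\alpha}r(d)=O(1)$). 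So any correct argument must let $d$ go up to $\lceil 1/\alpha\rceil$ or so, and your route, once repaired (blocks of length about $2/\alpha$, or Markov at twice the mean gap), delivers ``some $d\le 2/\alpha$ with $r(d)\gg\alpha^2N$'' --- a conclusion that is stronger in the count and perfectly sufficient for the application in Lemma~\ref{factopoly}, but not the stated $d\le 1/\alpha$ with $\ge\tfrac12\alpha^3N$ solutions.

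For comparison, the paper avoids the two-step reduction altogether: it considers the shifts $S+1,\dots,S+k$ with $k=\lceil 1/\alpha\rceil+1$, all contained in an interval of length $N+k$, and inclusion--exclusion gives $\sum_{i<j}|(S+i)\cap(S+j)|\ge k\alpha N-N-k$; since $|(S+i)\cap(S+j)|$ is exactly the number of solutions of $a-b=j-i$, averaging over the at most $k^2/2$ pairs $(i,j)$ produces a single $d\le\lceil 1/\alpha\rceil$ with at least $2(k\alpha N-N-k)/k^2\ge\alpha^3N$ solutions once $\alpha^2N$ is large. This keeps the difference in the intended range in one stroke, whereas your decomposition inevitably loses a constant factor in the range of $d$ unless you reproduce essentially this exact count.
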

\begin{proof}
For $k=\lceil{\frac{1}{\alpha}\rceil}+1,$ consider the following shifted sets $$S+1, S+2\dots S+k$$
By inclusion-exclusion formula we have 
\[N+k\ge|\cup_{i=1}^{k}(S+i) |\ge \sum_{i=1}^k|S+i|-\sum_{j<i} |S+i\cap S+j|=k\cdot\alpha N-\sum_{j<i}|S+i\cap S+j|.\]
Therefore,
$$\max_{i<j}|S+i\cap S+j|\ge 2\cdot \frac{k\cdot\alpha N-N-k}{k^2}\ge \alpha^3N.$$
This observation finishes the proof.

\end{proof}

\begin{lemma}\label{factopoly}
Suppose $P\in\mathbb{Z}[x].$ Then the equation
\begin{equation}\label{factorial}
(n!)^2=P(n)\,(\bmod\,p)
\end{equation}
has at most $\ll N^{3/4}$ solutions in $\mathbb{F}_p$ such that $H\le n\le H+N.$
\end{lemma}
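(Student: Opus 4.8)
The plan is to exploit the multiplicative recursion $n! = n\cdot(n-1)!$ together with the polynomial structure, which lets us convert the single congruence $(n!)^2 \equiv P(n)$ into a \emph{multiplicative} relation between consecutive values of the unknown quantity $n!$. Concretely, if both $n$ and $n+1$ lie in $[H,H+N]$ and both satisfy \eqref{factorial}, then writing $m = n!$ we get $m^2 \equiv P(n)$ and $((n+1)m)^2 \equiv P(n+1) \pmod p$, so that $(n+1)^2 P(n) \equiv P(n+1) \pmod p$. This is now a polynomial congruence in $n$ alone, of degree $\deg P + 2$; unless it is the zero polynomial modulo $p$, it has at most $O(1)$ solutions, so consecutive solution-pairs are rare and the solution set $S = \{n\in[H,H+N] : (n!)^2\equiv P(n)\}$ contains very few elements at mutual distance $1$. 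More generally, for any fixed shift $d$, a pair $n, n+d \in S$ forces $\big(\prod_{j=1}^{d}(n+j)\big)^2 P(n) \equiv P(n+d) \pmod p$, again a nontrivial polynomial congruence in $n$ of bounded degree (bounded in terms of $d$), hence at most $O_d(1)$ solutions. So for every $d$ the number of pairs $a,b\in S$ with $a-b=d$ is $O_d(1)$.

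Now suppose for contradiction that $|S| = \alpha N$ with $\alpha$ not too small, say $\alpha \gg N^{-1/4}$. Apply Lemma \ref{sieve}: there exists $d \le 1/\alpha$ such that the equation $a-b=d$ has at least $\tfrac{\alpha^3}{2}N$ solutions with $a,b\in S$. On the other hand, by the previous paragraph this count is $O_d(1)$, and more carefully the implied constant is polynomial in $d$ (the degree of the polynomial congruence is $\le \deg P + 2d$ and its number of roots mod $p$ is at most that), so the number of such pairs is $\ll d \le 1/\alpha$. Comparing the two bounds gives $\tfrac{\alpha^3}{2}N \ll 1/\alpha$, i.e. $\alpha^4 \ll 1/N$, whence $\alpha \ll N^{-1/4}$ and therefore $|S| = \alpha N \ll N^{3/4}$, which is the claimed bound.

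The main obstacle, and the point requiring care, is ensuring that the polynomial congruence $\big(\prod_{j=1}^{d}(n+j)\big)^2 P(n) - P(n+d) \equiv 0 \pmod p$ is genuinely nontrivial (not identically zero as a polynomial in $\mathbb{F}_p[n]$), and controlling the number of its roots uniformly. Nontriviality follows by a degree or leading-coefficient comparison: the two terms $\big(\prod_{j=1}^d (n+j)\big)^2 P(n)$ and $P(n+d)$ have different degrees (the former has degree $2d + \deg P > \deg P$, assuming $P$ is nonconstant; the constant-$P$ and $P\equiv 0$ cases are trivial or handled separately), so the difference has degree exactly $2d+\deg P$ and hence at most $2d + \deg P$ roots in $\mathbb{F}_p$. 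One should also note that if $p \mid P(n)$ for some $n\in S$ then $n! \equiv 0$, forcing $n \ge p$, which is incompatible with $n \le H+N < p$ in the relevant range (or can be excluded as $O(1)$ exceptional $n$), so we may assume $P(n)\not\equiv 0$ throughout. A secondary technical point is that Lemma \ref{sieve} as stated needs $\alpha N \ge 1$ and $k = \lceil 1/\alpha\rceil + 1$ to be meaningful; since we are free to assume $|S| \ge c N^{3/4}$ for a large constant $c$ (otherwise there is nothing to prove), this is automatic, and the arithmetic in the final comparison goes through.
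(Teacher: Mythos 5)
Your argument is correct and is essentially the paper's own proof: both combine Lemma \ref{sieve} with the observation that a pair $n,n+d$ of solutions forces the polynomial congruence $\bigl(\prod_{j=1}^{d}(n+j)\bigr)^2P(n)\equiv P(n+d)\ (\bmod\ p)$, which has at most $2d+\deg P$ roots, yielding $\alpha^3N\ll 1/\alpha$ and hence $\alpha\ll N^{-1/4}$. Your added care about non-degeneracy of the congruence and the degenerate cases of $P$ is a welcome refinement of a point the paper passes over, but it is not a different method.
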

\begin{proof}
Suppose \eqref{factorial} has $\alpha N $ solutions in the interval $[H,H+N].$ By Lemma \ref{sieve} there exists $d\le\frac{1}{\alpha}$ such that
\[(n!)^2=P(n)\,(\bmod\,p)\] and
\[((n+d)!)^2=P(n+d)\,(\bmod\,p)\]
hold for at least $\ge \frac{\alpha^3N}{2}$ values of $H\le n\le H+N.$
Subtracting last two equations we arrive at
\[(n!)^2(\prod_{k=1}^d(n+k)^2-1)\equiv P(n+d)-P(n)\,(\bmod\,p).\]
Combining this with~\eqref{factorial} we get the following polynomial congruence:
\[P(n)(\prod_{k=1}^d(n+k)^2-1)\equiv P(n+d)-P(n)\,(\bmod\,p).\]
This congruence, being non degenerate, has at most $2d+\deg P$ solutions. Thus,
\[\frac{1}{\alpha}\gg \alpha^3 N\]
and $\alpha\ll N^{-1/4}.$ This concludes the proof of the lemma. 
\end{proof}
We are now ready to prove the main result.

{\bf Proof of Theorem \ref{main}.}
Colour the set $[H,H+N]$ into $k$ colours such that $a$ and $b$ are coloured in the same way if and only if $a!\equiv b!\,(\bmod\,p).$
Observe that pairs $(n,n+1),$ $H\le n\le H+N$ are coloured in a different way. Indeed, the following two conditions $n!\equiv m!\,(\bmod\,p)$ and $(n+1)!\equiv (m+1)!\,(\bmod\,p)$ imply that $n=m.$

Consider now pairs of the form $(n,n+2).$ Suppose that $(n,n+2)$ and $(m,m+2)$ are coloured in the same way. This leads to 
\[n!=m!\,(\bmod\,p)\] and
\[(n+2)!=(m+2)!\,(\bmod\,p).\] 
Hence we must have
\[(n+1)(n+2)\equiv (m+1)(m+2)\,(\bmod\,p)\]
or, assuming that $n$ and $m$ are distinct 
\[n+m+3\equiv 0\,(\bmod\,p).\]
Since $1\le m,n\le p-1$ we have that $m+n+3=p$ or $m+n+3=2p.$ Latter implies that $m=p-1$ and $n=p-2$ or vice versa. But 
$(p-1)!\ne (p-2)!\,(\bmod\,p).$ So we are left to consider the first case. Let $m=p-3-n.$ Then
\begin{equation}\label{wilson}
n!\equiv (p-3-n)!\,(\bmod\,p).
\end{equation}
Multiplying \eqref{wilson} by 
$\displaystyle{\prod_{k=1}^{n+2}(p-3-n+k)}$ and using Wilson's theorem we end up with
\[n!\prod_{k=1}^{n+2}(p-3-n+k)\equiv -1\,(\bmod\,p).\] 
 Reducing both sides modulo $p$ we get
 \[(-1)^nn!(n+2)!\equiv -1\,(\bmod\,p).\]
 Multiplying both sides by the product $f(n+2)=(n+1)(n+2)$ leads to the equation 
 \[((n+2)!)^2\equiv (-1)^{n-1}f(n+2)\,(\bmod\,p).\]
 By Lemma \ref{factopoly} each of the equations $((n+2)!)^2\equiv f(n+2)\,(\bmod\,p)$ and \\ $((n+2)!)^2\equiv -f(n+2)\,(\bmod\,p)$ has at most $\ll N^{3/4}$ solutions in the interval $H\le n\le H+N.$ 
 
 Latter implies that we have at least $N-2+O(N^{3/4})$ pairs of the form $(n,n+2)$ that are coloured differently. We now suppose that pairs $(n,n+1)$ and $(m,m+2)$ are coloured in the same way.
 Then,
\[n!=m!\,(\bmod\,p)\] and
\[(n+1)!=(m+2)!\,(\bmod\,p).\]   
This implies 
\[n\equiv m^2+3m+1\,(\bmod\,p).\]
Therefore, using Lemma \ref{Burgess} we deduce that at least $ N/2+O(N^{1-\delta})$ pairs of the form $(n,n+1)$ do not correspond to pairs of the form $(n,n+2)$. Summarizing all the above we get at least $\frac{3}{2}N+ O(N^{1-\delta})$ pairs that are coloured in a different way and thus
\[k^2\ge \frac{3}{2}N+ O(N^{1-\delta}).\]
 
\begin{remark} One may try to improve constant $\sqrt{\frac{3}{2}}$ in the last theorem by considering the pairs $(n,n+k)$ for larger values of $k\ge 3.$ Following the same lines as above, one arrives at the study of simultaneous equations of the form $m!\equiv n!\ (\bmod\ p)$ and
\[f(m,n)\equiv 0\ (\bmod\ p)\]
where $f$ is a polynomial $f(x,y)\in\mathbb{Z}(x,y)$ of degree $k-1.$ Unfortunately, we were unable to employ this strategy to achieve any further improvement. 
\end{remark}
\section{Upper bound for $V(0,p-1)$ on average}

As was mentioned in the introduction, it was proved in \cite{Turkish} that there exists infinitely many primes $p$ such that  $n!\,(\text{mod}\  p)$ omits at least
\begin{equation}\label{turk3}
 p-V(0,p-1)\gg\frac{\log\log p}{\log\log\log p}
 \end{equation}
 residue classes. In this section, we show that the number of "missing" residue classes tends to infinity on average. 

We fix a few notations. Let $L/\mathbb{Q}$ be a finite extension of degree $n_L.$ For any ideal $\mathfrak{I}$ of the ring of integers $\mathcal{O}_L$, we denote the norm of an ideal by $N_{L/\mathbb{Q}}(\mathfrak{I})$ and write $N(\mathfrak{I})$. We also denote by $f\left(\mathfrak{p}/p\right)$ the inertial degree $\vert\left[\mathcal{O}_L/\mathfrak{p}:\mathbb{F}_{p}\right]\vert$ of the ideal $\mathfrak{p}$ above the rational prime $p$. The function $\pi_L(x)$ will count the number of prime ideals $\mathfrak{p}$ such that $N(\mathfrak{p}) \leq x$. Finally, denote  by $d_L$ the absolute discriminant of $L$.
\begin{theorem}\label{average}
We have 
 
 $$\frac{1}{\pi(x)}\sum_{p\leq x} \left(p-V(0,p-1)\right) \gg \frac{\log\log x}{\log \log \log x}\cdot$$
 \end{theorem}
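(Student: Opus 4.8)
The plan is to estimate, on average over $p\le x$, the number of \emph{repeated} factorials, that is, the size of the set $R_p$ of indices $n\in[2,p-1]$ for which $n!\equiv m!\pmod p$ for some $m$ with $2\le m<n$. Since the $p-2$ values $2!,\dots,(p-1)!$ realise exactly $V(0,p-1)$ distinct residues, one has $p-2=V(0,p-1)+|R_p|$, i.e. $p-V(0,p-1)=|R_p|+2$; so it suffices to prove $\sum_{p\le x}|R_p|\gg \pi(x)\,\tfrac{\log\log x}{\log\log\log x}$. Elements of $R_p$ are manufactured exactly as in the computation preceding Lemma~\ref{factopoly}: if $k(k+1)\cdots(k+j-1)\equiv1\pmod p$ with $3\le k\le p-j$, then $(k+j-1)!\equiv(k-1)!\pmod p$, so $n:=k+j-1$ lies in $R_p$. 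Hence, writing $Q_j(x):=x(x+1)\cdots(x+j-1)-1\in\mathbb Z[x]$ (monic of degree $j$) and
\[
A_j(p):=\bigl\{\,(k+j-1)\bmod p\ :\ p\mid Q_j(k),\ 3\le k\le p-j\,\bigr\},
\]
we get $|R_p|\ge\bigl|\bigcup_{2\le j\le J}A_j(p)\bigr|$ for every cutoff $J=J(x)$.

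First I would establish $\sum_{p\le x}|A_j(p)|\ge(1-o(1))\pi(x)$, uniformly for $2\le j\le J$. If $g_j$ is an irreducible factor of $Q_j$ over $\mathbb Q$, then the number of roots of $Q_j$ modulo $p$ is at least the number of roots of $g_j$ modulo $p$, and by the Chebotarev density theorem for the splitting field $M_j$ of $g_j$ the average of the latter over $p\le x$ equals the number of $\mathrm{Gal}(M_j/\mathbb Q)$-orbits on the roots of $g_j$, namely $1$. The restriction $3\le k\le p-j$ excludes at most $j+2$ residue classes, and a fixed class $c$ can occur only for the $O(\log(j!))$ primes dividing the nonzero integer $Q_j(c)$, a total loss of $O(j^{2}\log j)$, which is negligible.

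The factor $J$ in the final bound comes from the near-disjointness of the $A_j(p)$. If $n\in A_a(p)\cap A_b(p)$ with $2\le a<b\le J$, then subtracting the congruences $(n-b+1)\cdots n\equiv1$ and $(n-a+1)\cdots n\equiv1\pmod p$, as before Lemma~\ref{factopoly}, shows that $n-b+1$ is a common root modulo $p$ of $Q_b$ and $Q_{b-a}$. One checks that $Q_c$ and $Q_d$ are coprime in $\mathbb Q[x]$ for $c\ne d$ (reducing $Q_d$ modulo the defining relation of a root of $Q_c$ one never obtains $1$; equivalently, the integer $\mathrm{Res}(Q_c,Q_d)$ is nonzero), so such an $n$ can occur only when $p\mid\mathrm{Res}(Q_b,Q_{b-a})\ne0$, and then in at most $b$ ways. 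Summing over the $O(J^{2})$ pairs gives $\sum_{p\le x}\sum_{a<b}|A_a(p)\cap A_b(p)|\ll J^{5}\log J$. Bonferroni's inequality then yields
\[
\sum_{p\le x}|R_p|\ \ge\ \sum_{j=2}^{J}\sum_{p\le x}|A_j(p)|\ -\ \sum_{a<b}\sum_{p\le x}|A_a(p)\cap A_b(p)|\ \ge\ (1-o(1))(J-1)\,\pi(x),
\]
since $J^{5}\log J=o(\pi(x))$ for the $J$ below.

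Finally, I would push $J$ as far as the error terms permit. The only serious constraint is that the Chebotarev asymptotic above must have error $o(\pi(x))$ \emph{uniformly} in $j\le J$. Here $M_j$ has degree $n_{M_j}\le(\deg g_j)!\le j!$ and discriminant satisfying $\log d_{M_j}\ll j!\,j^{2}\log j$, and (after discarding the at most one field in the family whose Dedekind zeta function has a Siegel zero, by Landau's theorem on exceptional real characters of bounded conductor) the effective Chebotarev density theorem gives the required estimate once $\log x$ exceeds a fixed power of $n_{M_j}\log d_{M_j}$, i.e. once $(j!)^{C}\le\log x$ for an absolute constant $C$. This permits $J\asymp\frac{\log\log x}{\log\log\log x}$, whence $\frac1{\pi(x)}\sum_{p\le x}(p-V(0,p-1))\gg J\asymp\frac{\log\log x}{\log\log\log x}$. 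The main obstacle is exactly this balancing step: because one must count roots of $Q_j$ modulo $p$, one needs an effective Chebotarev (or prime ideal) theorem in number fields whose degree grows like $j!$, and it is this $j!$ (rather than $j$) that confines $J$ to double-logarithmic size; any improvement in that input, in particular GRH, would immediately enlarge $J$ and strengthen the theorem, in line with the remark in the introduction.
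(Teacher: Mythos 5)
Your reduction follows the paper's central idea: the polynomials $Q_j(x)=x(x+1)\cdots(x+j-1)-1$ (the paper's $f_j$), whose roots modulo $p$ manufacture repeated factorials, counted on average over $p\le x$ by algebraic prime counting; your Bonferroni/resultant control of the overlaps between different $j$ is in fact a more careful treatment of a point the paper passes over quickly. The genuine problem is your Siegel-zero step. You count roots via effective Chebotarev in the splitting field $M_j$ (degree up to $j!$) and dispose of the exceptional term $\mathrm{Li}(x^{\beta_j})$ by ``discarding the at most one field in the family whose Dedekind zeta function has a Siegel zero, by Landau's theorem on exceptional real characters''. Landau's repulsion theorem is a statement about real primitive Dirichlet characters, not about Dedekind zeta functions: to use it you must first pass, via Stark's lemma (the paper's reference \cite{Stark}, applicable here since $M_j/\mathbb{Q}$ is normal), from a Siegel zero of $\zeta_{M_j}$ to a zero of $L(s,\chi_j)$ for a quadratic subfield of $M_j$. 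Even granting that, Landau gives ``at most one exceptional character'', not ``at most one exceptional field'': nothing you prove prevents many of the splitting fields $M_j$, $j\le J$, from containing the \emph{same} quadratic subfield, in which case all of them inherit the same exceptional zero $\beta^{*}$ and discarding a single $j$ removes nothing from the error sum.

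For those fields the only unconditional effective substitute is the class-number bound $1-\beta^{*}\gg q^{-1/2+o(1)}$, with conductor $q$ possibly as large as $\exp\left(c\,J!\,J^{2}\log J\right)$; then $\mathrm{Li}(x^{\beta^{*}})/\pi(x)=o(1)$ forces $J!\lesssim \log\log x$, i.e.\ $J\ll \log\log\log x$, far short of the claimed bound. So the gap sits exactly at the step that dictates the admissible size of $J$; to close it along your route you would need to show that the quadratic subfields of $\mathrm{Spl}(Q_j)$, $2\le j\le J$, are pairwise distinct (or that no single one occurs for more than $o(J)$ values of $j$), which you do not address and which is not obvious. The paper is engineered precisely to avoid this: it counts degree-one prime ideals in the degree-$j$ root field $K_j=\mathbb{Q}(\alpha)$ via the prime ideal theorem, so the relevant discriminants are only $e^{O(j^{2}\log j)}$ rather than $e^{O(j!\,\mathrm{poly}(j))}$, and it restricts to odd degrees $2n+1$, so that $K_{2n+1}$ has no quadratic subfield and Stark's bound (\ref{Stark}) applies to \emph{every} member of the family, with no exceptional field to discard; the factorial $(2n+1)!$ in that bound is then what produces the same threshold $N\asymp \log\log x/\log\log\log x$ that you were aiming for.
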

 
\begin{proof}
 Let $N$ be a parameter which will be determined later. For $n\ge 1$ we consider the family of polynomials
 $$f_n(t)=t(t+1)\dots(t+n-1)-1.$$
 It is well-known (see \cite[$9$, part $\romannumeral 8 $, chapter $2$, section $3$, Pb $121$]{Polya} that $f_n(t)$ is irreducible over $\mathbb{Q}$ for all $n\ge 1.$ 
 Let  $\rho_n(p)$ denote the number of roots of $f_n(t)$ modulo $p.$ We observe that $f_n(t_0)\equiv 0\,(\bmod\,p)$ implies
 $$(t_0+n-1)!=(t_0-1)!\,(\bmod\,p).$$ Therefore, each distinct root of $f_n(t)$ modulo $p$ increases the number of "missing" values by $1.$ We thus want to produce a lot of roots of $f_n$ for many values of $n.$

 Let $K_n=\mathbb{Q}(\alpha)$ be the extension of $\mathbb{Q}$ obtained by adjoining a root of $f_n.$ By $C_k$ we denote  the subset of all non ramified primes in $K_n$ such that $f_n$ has exactly $k$ roots modulo $p.$ Observe, that

\begin{equation}\label{ineqroots}\sum_{p\leq x}\rho_n(p) \geq \sum_{k=1}^{n}\sum_{p\leq x \atop p\in C_k}k.\end{equation} 

 By Dedekind's theorem, up to finitely many exceptions, the primes $p$ such that $f_n$ has a root modulo $p$ correspond to the primes $p$ such that there exists a prime ideal $\mathfrak{p}$ in $\mathcal{O}_{K_n}$ above $p$ with inertial degree $f\left(\mathfrak{p}/p\right)=1.$ 
 
 Instead of working in the splitting field of $f_n$ and using Chebotarev's theorem as in \cite{Turkish}, we will directly count prime ideals in $K_n$ using prime ideal theorem. By the standard argument, the prime ideals of degree $>1$ will give negligible contribution. More precisely, we remark that counting prime ideals in $K_n$ of degree $1$ is equivalent to counting the rational primes $p$ with weight $k$ when $f_n$ has $k$ roots modulo $p.$ Thus, we have

 \begin{equation}\label{fundamental}\sum_{k=1}^{n}\sum_{p\leq x \atop p\in C_k}k = \sum_{N(\mathfrak{p})\leq x \atop f\left(\mathfrak{p}/p\right)=1} 1.\end{equation}

By the effective prime ideal theorem (see \cite[Theorem $5.33$]{IK})\footnote{We could equally apply effective version of Chebotarev theorem (\cite{effective}) for the trivial extension $K_n/K_n$.}, there exists an absolute constant $c>0$ such that for all $n\ge 1$
\begin{equation}\label{primeideal}\sum_{N(\mathfrak{p})\leq x \atop f\left(\mathfrak{p}/p\right)=1} 1 = \pi(x) + O\left(Li(x^{\beta_n}) + \frac{x}{\log x}\exp\left( -c\sqrt{\frac{\log x}{n^2}}\right)\right)\end{equation} where $\beta_n$ is the potential positive real zero of the Dedekind zeta function $\zeta_{K_n}$ and $$0<1-\beta_n \ll \frac{1}{\log d_{K_n}}\cdot$$ 

 We now restrict ourselves to the family of polynomials $\left\{f_{2n+1}(x), 1\leq n\leq N\right\}.$ 
 Recall that by the result of  Stark~\cite[Lemma $8$]{Stark}, we can control potential Siegel zeroes provided that the original extension does not contain any quadratic sub-extension. We do so here since $K_{2n+1}=\mathbb{Q}(\alpha)$ is of an odd degree. Latter yields the bound \begin{equation}\label{Stark}\beta_{2n+1} \leq 1-\frac{1}{4(2n+1)!\log \vert d_{K_{2n+1}}\vert}\cdot \end{equation}
 Using (\ref{ineqroots}) together with (\ref{primeideal}), we derive
\begin{align}\label{missing} \frac{1}{\pi(x)}\sum_{p\leq x} \left(p-V(0,p-1)\right)&\geq \sum_{n=1}^{N}\frac{1}{\pi(x)}\sum_{p\leq x}\rho_{2n+1}(p) \notag \\
& \geq \sum_{n=1}^{N}\frac{1}{\pi(x)}\sum_{k=1}^{2n+1}k \sum_{p\leq x \atop p\in C_k}1   \\ 
& \geq  N + O\left(\frac{1}{\pi(x)}\left\{\sum_{n=1}^{N} Li(x^{\beta_{2n+1}}) + \frac{x}{\log x}\exp\left( -c\sqrt{\frac{\log x}{(2n+1)^2}}\right)\right\}\right)\cdot\notag \end{align}

Hence, we have to choose parameter $N$ such that
\begin{equation}\label{Siegel} N \gg \sum_{n=1}^{N} \frac{1}{\pi(x)}\left\{Li(x^{\beta_{2n+1}})+ \frac{x}{\log x}\exp\left( -c\sqrt{\frac{\log x}{(2n+1)^2}}\right)\right\}.\end{equation}

The sum of the exponential terms in (\ref{Siegel}) satisfies  this as long as $N\ll \log^{1/2} x.$  

Since $K_{2n+1}$ is generated by the single root of $f_{2n+1},$ we can bound its discriminant by  the discriminant of the polynomial $f_{2n+1}$. Hence, denoting by $\alpha_i$ the roots of $f_{2n+1}$ we derive
 
 \begin{equation}\label{Discbound}d_{K_{2n+1}} \leq \prod_{i,j \atop i<j}^{2n+1} \vert\alpha_i-\alpha_j \vert^2 \ll n^{10n^2},\end{equation} where we used the bound $|\alpha_i-\alpha_j|\ll n$ which is proved in \cite[Lemma $2$]{Turkish}.

To bound the contribution coming from the potential Siegel zeroes, we apply the result of Stark (\ref{Stark}) together with the discriminant bound (\ref{Discbound}) to arrive at

\begin{equation}\label{loglog}\sum_{n=1}^{N} \frac{1}{\pi(x)} Li(x^{\beta_{2n+1}}) \ll \sum_{n=1}^{N} x^{-\frac{1}{n!\log(n^{n^2})}} \ll N x^{-\frac{1}{N^N N^2}}.\end{equation} 
Using the previous bound (\ref{loglog}) and standard computations, we deduce that inequality (\ref{Siegel}) is true as long as \begin{equation}\label{uncondbound}N \ll \frac{\log \log x}{\log \log \log x}\cdot\end{equation} 
We are left to note that the "bad" primes which do not satisfy Dedekind's theorem are exactly the primes dividing $\left[\mathcal{O}_{K_{2n+1}}:\mathbb{Z}\left[\alpha\right]\right].$ We have at most $\omega(2n+1))\ll \log n$ of such primes and, using (\ref{uncondbound}), their total contribution is at most 

$$\frac{1}{\pi(x)}\sum_{n\leq N}\sum_{p\leq x \atop \text{p 'bad' for } K_{2n+1}} \rho_n(p) \ll \frac{1}{\pi(x)}\sum_{n\leq N}n\log n \ll \frac{N^{2}\log N}{\pi(x)} =o (N).$$

\end{proof}

Assuming Generalized Riemann Hypothesis (GRH) the bound from Theorem ~\ref{average} can be significantly improved.  
  \begin{theorem}\label{GRH} 
Assume that GRH is true. Then, 
 
 $$\frac{1}{\pi(x)}\sum_{p\leq x} \left(p-V(0,p-1)\right) \gg \frac{x^{1/4}}{\log x}\cdot$$
 \end{theorem}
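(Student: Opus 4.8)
The plan is to rerun the proof of Theorem~\ref{average} essentially word for word, the only change being that the unconditional effective prime ideal theorem \eqref{primeideal} is replaced by its GRH-conditional sharpening. Under GRH the Dedekind zeta function $\zeta_{K_n}$ has no exceptional real zero, so the term $Li(x^{\beta_n})$ disappears entirely; in particular Stark's lemma \eqref{Stark} and the accompanying restriction to the odd-degree family $\{f_{2n+1}\}$ become unnecessary, and one may work with the full family $f_1,\dots,f_N$. The GRH form of the effective prime ideal theorem (Lagarias--Odlyzko, or the conditional version of \cite[Theorem~$5.33$]{IK}; equivalently effective Chebotarev for the trivial extension $K_n/K_n$) gives, for every $n\ge 1$,
\[
\sum_{N(\mathfrak p)\le x \atop f(\mathfrak p/p)=1} 1 \;=\; \pi(x) \;+\; O\!\left(\sqrt{x}\,\log\!\big(d_{K_n}\,x^{\,n_{K_n}}\big)\right),
\]
where $n_{K_n}=\deg f_n=n$ since $f_n$ is irreducible over $\mathbb{Q}$.

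Next I would feed in the discriminant bound \eqref{Discbound}, which yields $\log d_{K_n}\ll n^2\log n$, so that the error above is $O\big(\sqrt{x}\,(n^2\log n+n\log x)\big)$ for each $n$. Summing over $1\le n\le N$, dividing by $\pi(x)\asymp x/\log x$, and bounding $\frac{1}{\pi(x)}\sum_{p\le x}(p-V(0,p-1))$ from below in terms of the $\rho_n(p)$ exactly as in \eqref{missing} (the combinatorial reduction, and the negligible contribution of the ``bad'' primes dividing $[\mathcal O_{K_n}:\mathbb Z[\alpha]]$, are unchanged), one arrives at
\[
\frac{1}{\pi(x)}\sum_{p\le x}\big(p-V(0,p-1)\big)\;\ge\; N + O\!\left(\frac{\log x}{\sqrt{x}}\big(N^3\log N + N^2\log x\big)\right).
\]
Optimising, i.e.\ choosing $N$ as large as possible subject to the error term being $o(N)$, allows $N$ of size $\asymp x^{1/4}/\log x$, which gives the claimed bound. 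Taking the same $N$ in the least-prime-ideal argument of \cite{Turkish} under GRH likewise recovers the consequence $p-V(0,p-1)\gg x^{1/4}/\log x$ for infinitely many $p$ mentioned after \eqref{turk2}.

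The main obstacle is the uniformity of the GRH error term in the field $K_n$: one needs it to be explicit and polynomial in $\log d_{K_n}$ and $n_{K_n}$ \emph{simultaneously}, and then one must absorb the super-exponential growth $d_{K_n}=\exp\!\big(O(n^2\log n)\big)$ when summing over $n\le N$. It is precisely this growth, together with the $\sqrt{x}$ size of the GRH error, that caps $N$ near $x^{1/4}$ and so produces the exponent $1/4$ in the theorem; sharpening the discriminant bound \eqref{Discbound}, or the error term, is the only route to a stronger result. A minor point to keep in mind is that, as in Theorem~\ref{average}, the passage from a root of $f_n$ modulo $p$ to a genuinely missed residue class must be handled carefully, so that the inequality $p-V(0,p-1)\gg\sum_{n\le N}\rho_n(p)$ is preserved.
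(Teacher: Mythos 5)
Your proposal is correct and follows essentially the same route as the paper: drop the odd-degree restriction (no Siegel zero under GRH), apply the GRH-conditional prime ideal theorem with error $O\big(\sqrt{x}(\log d_{K_n}+n\log x)\big)$, insert the discriminant bound \eqref{Discbound}, sum over $n\le N$, and take $N\asymp x^{1/4}/\log x$, with the bad primes handled exactly as in Theorem~\ref{average}. The paper's own proof does precisely this, so there is nothing to add.
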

\begin{proof}

We consider as before the family of polynomials $f_n$ and the associated family of extensions $K_n$ of degree $n.$ Here, we do not need to restrict to odd $n$ because we use GRH instead of Stark's result.

Following the same lines as in the proof of Theorem \ref{average} and replacing the error term in the prime ideal theorem by the conditional one, we obtain

\begin{equation}\label{uncond}\sum_{p\leq x}\rho_n(p) \geq \pi(x) + O\left(x^{\frac{1}{2}}(\log d_{K_{n}}+n\log x)\right).\end{equation} Averaging over the family of polynomials $\left\{f_n(x), 1\leq n\leq N\right\}$ and performing the same computation as in (\ref{missing}), we arrive at
$$\frac{1}{\pi(x)}\sum_{p\leq x} \left(p-V(0,p-1)\right)\geq \sum_{n=1}^{N}\frac{1}{\pi(x)}\sum_{p\leq x}\rho_n(p) \gg N + ET.$$ Using the discriminant bound (\ref{Discbound}) we can bound error term by 

$$ET \ll \sum_{n=1}^{N}\left\{ x^{-\frac{1}{2}}\log x (\log (n^{n^2}) + n\log x)\right\} \ll\frac{\log^2 x}{\sqrt{x}} N^{3}.$$ Easy computation shows that the error term is negligible compared to $N$ provided $$N \ll  \frac{x^{1/4}}{\log x},$$ and the result follows. As in the proof of Theorem \ref{average}, we can easily deal with the additional restriction $p\nmid \left[\mathcal{O}_{K_{n}}:\mathbb{Z}\left[\alpha\right]\right].$ We bound the contribution of 'bad' primes in exactly the same way:
\begin{equation*}\frac{1}{\pi(x)}\sum_{n\leq N}\sum_{p\leq x \atop \text{p 'bad' for } K_n} \rho_n(p) \ll \frac{1}{\pi(x)}\sum_{n\leq N}n\log n \ll \frac{N^{2}\log N}{\pi(x)} =o (N). \end{equation*}

\end{proof}

 Theorem~\ref{GRH} directly implies:
\begin{corollary}\label{GRHinf}
Assume that GRH is true. There exists infinitely many primes $p$ such that 
\[p-V(0,p-1)\gg \frac{p^{1/4}}{\log p}\cdot \] 
\end{corollary}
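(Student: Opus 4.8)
The plan is to deduce Corollary \ref{GRHinf} from Theorem \ref{GRH} by a straightforward averaging/pigeonhole argument: if \emph{every} prime $p\le x$ satisfied $p-V(0,p-1)=o\!\left(x^{1/4}/\log x\right)$, then averaging this bound over all $p\le x$ would contradict the lower bound in Theorem \ref{GRH}. So first I would suppose, for contradiction, that there are only finitely many primes with $p-V(0,p-1)\gg p^{1/4}/\log p$; equivalently, that for every $\varepsilon>0$ there is $x_0$ such that $p-V(0,p-1)\le \varepsilon\, p^{1/4}/\log p$ for all primes $p\in[x_0,x]$.

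The next step is to split the average $\frac{1}{\pi(x)}\sum_{p\le x}\left(p-V(0,p-1)\right)$ into the finitely many small primes $p<x_0$ and the primes $p\in[x_0,x]$. The small primes contribute at most $\frac{1}{\pi(x)}\sum_{p<x_0} p \ll_{x_0} \frac{x_0^2}{\pi(x)} = o(1)$, which is negligible. For the remaining primes, using the assumed bound and the trivial monotonicity $p^{1/4}/\log p \le x^{1/4}/\log x$ for $p\le x$, we get
\[
\frac{1}{\pi(x)}\sum_{x_0\le p\le x}\left(p-V(0,p-1)\right)\le \frac{1}{\pi(x)}\sum_{p\le x}\varepsilon\,\frac{p^{1/4}}{\log p}\le \varepsilon\,\frac{x^{1/4}}{\log x}.
\]
Hence the full average is at most $\varepsilon\, x^{1/4}/\log x + o(1)$, and since $\varepsilon>0$ is arbitrary this forces $\frac{1}{\pi(x)}\sum_{p\le x}\left(p-V(0,p-1)\right) = o\!\left(x^{1/4}/\log x\right)$, directly contradicting Theorem \ref{GRH}. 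Therefore infinitely many primes satisfy $p-V(0,p-1)\gg p^{1/4}/\log p$.

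There is really no substantial obstacle here: the content is entirely in Theorem \ref{GRH}, and the corollary is the standard "average implies infinitely often" principle. The only mild point of care is making sure the normalization matches — the average in Theorem \ref{GRH} is $x^{1/4}/\log x$ while the per-prime conclusion must read $p^{1/4}/\log p$ — but since we only ever use $p\le x$ this is handled by the trivial inequality $p^{1/4}/\log p\le x^{1/4}/\log x$ used above, and letting $x\to\infty$ along the primes produced at each scale gives an infinite set. I would also remark that one may phrase this more quantitatively: for each large $x$ the set of primes $p\le x$ with $p-V(0,p-1)\gg p^{1/4}/\log p$ has positive density-type lower bound, but for the corollary as stated the bare contradiction argument suffices.
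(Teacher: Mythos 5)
Your argument is correct and is exactly the routine "average implies infinitely often" deduction that the paper has in mind when it says Theorem \ref{GRH} directly implies the corollary (the paper gives no further details). Nothing to add.
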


 \begin{remark} Working in $K_n$ instead of the splitting field of $f_n$ allows us to get the bound on the discriminant exponentially smaller than the one used in~\cite{Turkish}. The main improvement then comes from the fact that counting prime ideals of degree $1$ in $K_n$ corresponds to counting primes with the appropriate weight  suitable for our problem. \end{remark}

  \section{Concluding remarks.  Erd\"os conjecture on average}
 
It would be interesting to prove Erd\"os conjecture for almost all primes $p.$  Indeed, if a prime $p$ satisfies Erd\"os conjecture, then at least two of the $f_n(x),\  n=1,\dots,p-1$ have a root modulo $p$. Chebotarev's theorem tells us that the density of primes $p$ such that $f_n(x)$ has no roots modulo $p$ is equal to the proportion of elements in $Gal(Spl(f_n))$ without fixed points.  The natural strategy would be to apply Chebotarev's theorem to the product  $f:=\displaystyle{\prod_{i}f_{n_i}}$ to control the density of primes failing Erd\"os conjecture. This amounts to understanding the proportion of elements in the  Galois group without fixed points. The following lemma helps us to do that:

 \begin{lemma}\label{fixed}
 Suppose that $G$ is a subgroup of $S_n$ acting on a set $X$ of $n$ elements. Then the proportion $\sigma_n$ of elements of $G$ that does not have any fixed point satisfies 
 \begin{equation}\label{Burnside}1-1/n! \leq \sigma_n \leq 1-1/n.\end{equation}
 \end{lemma}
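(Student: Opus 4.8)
The plan is to establish the two inequalities of \eqref{Burnside} separately, by elementary permutation-group counting: the right-hand inequality is a one-line orbit--stabiliser computation, while the left-hand inequality rests on a classical non-vanishing result and is the only delicate point.

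For $\sigma_n\le 1-1/n$ I would fix any point $x_0\in X$ and pass to its stabiliser $G_{x_0}=\{g\in G:gx_0=x_0\}$. Since the orbit $Gx_0$ sits inside the $n$-element set $X$, the orbit--stabiliser theorem gives $|G_{x_0}|=|G|/|Gx_0|\ge |G|/n$. Every element of $G_{x_0}$ fixes $x_0$, hence is not counted by $\sigma_n$, so the fixed-point-free elements of $G$ number at most $|G|-|G_{x_0}|\le |G|\,(1-1/n)$; dividing by $|G|$ gives the bound. It is sharp, e.g. for a group of prime order $n$ acting as a single $n$-cycle, where only the identity fixes a point. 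Note this also refines the trivial estimate $\sigma_n\le 1-1/|G|\le 1-1/n!$ coming from the fact that the identity always has a fixed point.

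For the lower bound the plan is to guarantee that fixed-point-free elements actually occur, via the theorem of Jordan: a transitive action of a finite group on a set with more than one point has a fixed-point-free element. This is precisely the relevant setting, since $G=\mathrm{Gal}(\mathrm{Spl}(f_n))$ acts transitively on the roots of the irreducible polynomial $f_n$; and it follows at once from the orbit-counting (Burnside) lemma, because for a transitive action $\sum_{g\in G}|\operatorname{Fix}(g)|=|G|$, while the identity alone already contributes $n\ge 2$, so some non-identity element must contribute $0$. With at least one fixed-point-free element among the $|G|\le n!$ elements of $G$, one obtains $\sigma_n\ge 1/|G|\ge 1/n!$.

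The step I expect to be the real obstacle is the left-hand inequality in the sharp form displayed: as written, the two ends satisfy $1-1/n!>1-1/n$ for $n\ge 3$, so the chain cannot be read literally, and indeed $\sigma_n\ge 1-1/n!$ is false in general --- for $G=S_3$ acting on three letters one has $\sigma_3=D_3/3!=1/3$ (with $D_3=2$ the number of derangements on three symbols), well below $1-1/6$, and $\sigma_n\to 1/e$ for $G=S_n$. The intended left-hand side is $1/n!$, so that \eqref{Burnside} should read $1/n!\le\sigma_n\le 1-1/n$ for transitive $G$ --- i.e. $\sigma_n$ is bounded away from both $0$ and $1$, which is all that the discussion above requires. (One does get the literal bound $\sigma_n\ge 1-1/n!$ under the additional hypothesis that $G$ acts semiregularly, $e$ being then the only element with a fixed point, whence $\sigma_n=1-1/|G|\ge 1-1/n!$; but the Galois groups considered above need not be semiregular.)
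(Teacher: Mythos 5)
Your right-hand bound is the paper's argument in a slightly different guise: the authors apply Burnside's lemma, $1\le |X\backslash G|=\frac{1}{|G|}\sum_{\sigma\in G}|X^{\sigma}|\le \frac{n}{|G|}\,|\{\sigma: X^{\sigma}\neq\varnothing\}|$, which is exactly the estimate $|\{\sigma\ \text{with a fixed point}\}|\ge |G|/n$ that you derive from the orbit--stabiliser theorem at a single point; either route gives $\sigma_n\le 1-1/n$. The real divergence is on the left-hand inequality, and your diagnosis is correct: as printed, $1-1/n!\le \sigma_n\le 1-1/n$ is self-contradictory for $n\ge 3$, and $\sigma_n\ge 1-1/n!$ is false (your $S_3$ computation $\sigma_3=1/3$ settles it). What the paper's justification ``the identity elements fix points'' actually yields is $1-\sigma_n\ge 1/|G|\ge 1/n!$, i.e.\ a second \emph{upper} bound $\sigma_n\le 1-1/n!$, so in the source the quantity $1-1/n!$ belongs on the same side of $\sigma_n$ as $1-1/n$ and the displayed inequality is garbled. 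Your repair, $\sigma_n\ge 1/n!$ via Jordan's theorem, is a genuinely different (and correct) statement; note that it requires the transitivity hypothesis you add, which is absent from the lemma as stated (the trivial subgroup has $\sigma_n=0$) but holds in the intended application, where $G=\mathrm{Gal}(\mathrm{Spl}(f_n))$ permutes the roots of the irreducible $f_n$ transitively. One slip in your final aside: for a semiregular action $|G|\le n$, so $\sigma_n=1-1/|G|\le 1-1/n\le 1-1/n!$; the inequality you assert there is backwards, and in fact no subgroup of $S_n$ with $n\ge 3$ satisfies $\sigma_n\ge 1-1/n!$. Be aware also that the density computation following the lemma tacitly uses the false bound $\sigma_n\ge 1-1/n!$ (to keep $\sum_n(1-\sigma_n)/\sigma_n$ bounded), so your corrected lower bound does not rescue that argument --- but that is a defect of the paper, not of your proof.
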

 
 \begin{proof}
 We denote by $X^{\sigma}$ the number of elements of $X$ fixed by $\sigma \in G$ and $X\backslash G$ the number of orbits of the action of $G$ on $X$. By Burnside's lemma, we have that 
 
 $$|X\backslash G|=\frac{1}{|G|}\sum_{\sigma \in G} |X|^{\sigma}.$$ Hence
 
 $$1\leq |\{\sigma,\,\, X^{\sigma}\neq \varnothing\}|\frac{n}{|G|}$$ and the result follows. The other side of the inequality is easy because the identity elements fix points.

 \end{proof}
 
 If the splitting fields of $f_{n_i}(x)$ are disjoint we can apply Chebotarev's theorem to $f$ and bound the number of permutations without fixed points in $$ Gal(Spl(f)) \cong\prod_{i}Gal(Spl(f_{n_i}))  .$$ 
 
 Several computations provides support towards the following conjecture:  
 \begin{conjecture}\label{conj}
 Let $n_1\neq n_2$ be positive integers. Then 
 
 $$Spl(f_{n_1})\cap Spl(f_{n_2})=\mathbb{Q}.$$
 
 \end{conjecture}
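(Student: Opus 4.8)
\textbf{A strategy towards Conjecture \ref{conj}.} The plan is to reduce the conjecture to a question about quadratic subfields and then to the arithmetic of $\mathrm{disc}(f_n)$. Write $G_n=Gal(Spl(f_n)/\mathbb{Q})$; since the intersection of two Galois extensions of $\mathbb{Q}$ is again Galois, $L:=Spl(f_{n_1})\cap Spl(f_{n_2})$ is Galois over $\mathbb{Q}$ and $Gal(L/\mathbb{Q})$ is simultaneously a quotient of $G_{n_1}$ and of $G_{n_2}$. The first thing I would establish is that $G_n\cong S_n$ for every $n\ge 2$, with the single exception $n=4$: there the identity $t(t+1)(t+2)(t+3)=(t^2+3t+1)^2-1$ exhibits $f_4$ as a quadratic in $t^2+3t+1$, one finds $Spl(f_4)=\mathbb{Q}(\sqrt2,\sqrt{5+4\sqrt2})$ and $G_4\cong D_4$ (the case $n=1$ is trivial, $f_1=t-1$). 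Granting this, take without loss of generality $n_1<n_2$; the finitely many pairs with $n_2\le 4$ are checked by hand, so assume $n_2\ge 5$. Then $Gal(L/\mathbb{Q})$ has order $<n_2!$ (being a quotient of $G_{n_1}$) and is a quotient of $S_{n_2}$, whose only quotients are $1,\mathbb{Z}/2,S_{n_2}$; hence $Gal(L/\mathbb{Q})\in\{1,\mathbb{Z}/2\}$ and $L$ lies in the unique quadratic subfield $\mathbb{Q}(\sqrt{d_{n_2}})$ of $Spl(f_{n_2})$, where $d_n:=\mathrm{disc}(f_n)$ is a nonsquare because $G_n=S_n\not\subseteq A_n$. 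As $L$ also sits inside $Spl(f_{n_1})$ --- whose quadratic subfields are $\mathbb{Q}(\sqrt{d_{n_1}})$ when $n_1\neq 4$, and $\mathbb{Q}(\sqrt2),\mathbb{Q}(\sqrt{-7}),\mathbb{Q}(\sqrt{-14})$ when $n_1=4$ --- the conjecture follows once one proves that the squarefree kernel of $d_n$ is an injective function of $n$ (and never equals $2,-7,-14$, to handle the $n_1=4$ slot).

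\textbf{The Galois group.} As $f_n$ is irreducible, $G_n$ is transitive, so it is enough to produce (i) a prime $p\nmid d_n$ with $f_n\bmod p$ equal to a linear factor times an irreducible of degree $n-1$, and (ii) a prime $q>2$ with $f_n\equiv(t-a)^2h(t)\pmod q$, $h$ separable and coprime to $t-a$. From (i), the Frobenius at $p$ is an $(n-1)$-cycle; such a cycle, fixing the one remaining point, admits no nontrivial block system --- a block through the fixed point is invariant and meets the support of the cycle in an invariant subset, hence in $\varnothing$ or the whole support, forcing block size $1$ or $n$ --- so $G_n$ is primitive. From (ii), the tame inertia above $q$ is generated by a transposition. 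A primitive permutation group of degree $n$ containing a transposition is $S_n$, whence $G_n\cong S_n$. For prime $n$, ingredient (i) is automatic: $t(t+1)\cdots(t+n-1)\equiv t^n-t\pmod n$, so $f_n\equiv t^n-t-1\pmod n$ is Artin--Schreier irreducible, giving an $n$-cycle (and transitivity already forces primitivity in prime degree). The difficulty is composite $n$: I would analyse $f_n$ modulo the largest prime $P\le n$ (so $P>n/2$ by Bertrand), using $t(t+1)\cdots(t+n-1)\equiv(t^P-t)\,t(t+1)\cdots(t+n-1-P)\pmod P$, and look there for auxiliary primes realising (i) and (ii). I expect \emph{this} to be the principal obstacle: for any fixed $n$ the two factorization patterns turn up in a quick search, but a proof uniform in $n$ seems to demand either effective equidistribution of the splitting types of the family $\{f_n\}$ or a structural feature of the rising factorials that has so far escaped us.

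\textbf{The discriminant.} Reduced to quadratic fields, one must pin down the squarefree part of $d_n$. From $f_n(\alpha)=0\iff\prod_{k=0}^{n-1}(\alpha+k)=1$ one gets $f_n'(\alpha)=\sum_{k=0}^{n-1}(\alpha+k)^{-1}$, so
\[ d_n=(-1)^{\binom{n}{2}}\prod_i\ \sum_{k=0}^{n-1}\frac{1}{\alpha_i+k}=\pm\,\mathrm{Res}(f_n,f_n'), \]
which one can attempt to evaluate, or reduce modulo small primes, via $f_n=g_n-1$ with $g_n(t)=t(t+1)\cdots(t+n-1)$. The aim is either an explicit recursion or product formula for $d_n$ exposing its squarefree kernel, or --- more realistically --- to exhibit for each $n$ a prime dividing $d_n$ to an \emph{odd} power and lying in an $n$-dependent range, for instance exceeding every prime that divides some $d_m$ to an odd power for $m<n$; this already forces the fields $\mathbb{Q}(\sqrt{d_n})$ to be pairwise distinct (and, being a fresh ramified prime each time, even mutually linearly disjoint). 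By \eqref{Discbound}, $d_n$ has size $\exp(O(n^2\log n))$, and controlling the squarefree part of numbers in such a family is notoriously hard (morally of $abc$ type), so I expect a complete proof of this step to rely on an arithmetic identity special to the $f_n$; meanwhile computer algebra confirms the needed distinctness for all $n$ up to a large bound, which is the numerical evidence behind the conjecture.

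With Conjecture \ref{conj} --- or rather the mutual-disjointness strengthening just mentioned --- in hand, applying Chebotarev's theorem to $f=\prod_{2\le n\le N}f_n$ together with Lemma \ref{fixed} bounds the density of primes failing Erd\"os's conjecture by $\prod_{n=2}^{N}\sigma_n\le\prod_{n=2}^{N}(1-1/n)=1/N$, and letting $N\to\infty$ gives that $n!\,(\bmod\,p)$ is non-injective on $\{2,\dots,p-1\}$ for a set of primes of density one --- Erd\"os's conjecture on average.
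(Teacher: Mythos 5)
There is nothing in the paper to compare against here: the statement is posed as a \emph{conjecture}, supported only by computations, and the authors never prove it (they only use it, together with Lemma \ref{fixed}, to deduce the Erd\H{o}s conjecture on average). Your text is likewise not a proof but a reduction programme, and you say so yourself; the honest verdict is that the gap is genuine on both sides. Your reduction is sound as far as it goes: the intersection $L$ is Galois over $\mathbb{Q}$ with group a common quotient of $G_{n_1}$ and $G_{n_2}$, and once one knows $G_n\cong S_n$ for $n\ge 5$ (plus the hand-checked small cases and the $D_4$ analysis of $n=4$), $L$ is forced into the unique quadratic subfield $\mathbb{Q}(\sqrt{d_n})$ on each side, so everything hinges on the quadratic data.

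But both pillars are left unproved. (a) The claim $Gal(Spl(f_n)/\mathbb{Q})\cong S_n$ for all $n\neq 4$ is only sketched: the Artin--Schreier reduction $f_n\equiv t^n-t-1\ (\bmod\ n)$ handles transitivity and an $n$-cycle for prime $n$, but you have no uniform source of the factorization patterns (i) and (ii) for composite $n$, and you flag this as the principal obstacle; moreover your step ``a double root mod $q$ gives a transposition in inertia'' is not correct as stated --- a repeated factor of $f_n$ modulo $q$ need not force ramification (two distinct roots can collide modulo a prime above $q$); one needs, e.g., that $q$ divides $\mathrm{disc}(f_n)$ to exactly the first power, or a local analysis of the ring of integers at $q$. (b) The decisive arithmetic input, injectivity of the squarefree kernel of $\mathrm{disc}(f_n)$ (avoiding $2,-7,-14$ for the $n_1=4$ slot), is exactly the hard open content of the conjecture; you acknowledge it is of $abc$-type difficulty and offer no identity or recursion for $\mathrm{disc}(f_n)$ that would control its squarefree part. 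So the proposal is a reasonable strategy that cleanly isolates where the difficulty lives, but it does not establish the statement --- which, to be fair, the paper does not either.
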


 This conjecture together with Lemma~\ref{fixed} imply the Erd\"os conjecture on average. Indeed,  for each prime $p$ failing to satisfy  the aforementioned conjecture, each $f_n$ has at most $1$ root. Hence, the density of primes $S$ failing Erd\"os conjecture is

 \begin{align*}S\leq \sum_{n=1}^{N} (1-\sigma_n) \prod_{j\neq n}\sigma_j  =  \left(\sum_{n=1}^{N}\frac{(1-\sigma_n)}{\sigma_n}\right) \prod_{n=1}^{N}\sigma_n 
 \ll  \prod_{n=1}\left(1-\frac{1}{n}\right){\longrightarrow} \,0.
 \end{align*} where we used Lemma \ref{Burnside}.
 
We notice that proving Conjecture \ref{conj} even for a good proportion of $n$ would suffice.

\section*{Acknowledgements}

The authors would like to thank Andrew Granville and Igor Shparlinski for valuable comments.
During the preparation of this manuscript, O.K. was supported by the ISM doctoral grant and J. Armand Bombardier Foundation excellence award. M.M. was supported by a postdoctoral grant in CRM of Montreal under the supervision of Andrew Granville and Dimitris Koukoulopoulos.

\bibliography{factorials}
\bibliographystyle{alpha}

\end{document}